\documentclass[12pt]{amsart}
\usepackage{enumerate, amscd, amssymb, fullpage}

\newtheorem{theorem}{Theorem}[section]
\newtheorem{lemma}[theorem]{Lemma}

\newtheorem{corollary}[theorem]{Corollary}
\theoremstyle{definition}

\def\a{\alpha}
\def\s{\sigma}

\begin{document}

\subjclass[msc2010]{primary 53C25}



\title{Symmetry in the Geometry of Metric Contact Pairs}



\author{G. Bande}
\address{Universit\`a degli Studi di Cagliari\\
Dipartimento di Matematica e Informatica\\
Via Ospedale 72, 09124 Cagliari, ITALIA}
\email{gbande{\char'100}unica.it}

\author{D. E. Blair}
\address{Department of Mathematics\\
Michigan State University\\
 East Lansing, MI 48824--1027, USA}
 \email{blair{\char'100}math.msu.edu}

\thanks{The second author was supported by a Visiting Professor fellowship at the Universit\`a degli Studi di Cagliari in April 2011, financed by Regione Autonoma della Sardegna.}

\begin{abstract}
We prove that the universal covering of a complete locally symmetric normal metric contact pair manifold is a Calabi-Eckmann manifold. Moreover we show that a complete, simply connected, normal metric contact pair manifold such that the foliation induced by the vertical subbundle is regular and reflections in the integral submanifolds of the vertical subbundle are isometries, then the manifold is the product of  globally $\phi$-symmetric spaces and fibers over a locally symmetric space endowed with a symplectic pair.
\end{abstract}

\maketitle                   






\section{Introduction}

 In real contact geometry the question of locally symmetric contact metric manifolds has a long history and a short answer.  Already in 1962 Okumura \cite{O} proved that a locally symmetric Sasakian manifold is locally isometric to the sphere $S^{2n+1}(1)$  and in 2006 Boeckx and Cho \cite{BC} proved that a locally symmetric contact metric manifold is locally isometric to $S^{2n+1}(1)$ or to 
 $E^{n+1}\times S^n(4)$. Various studies and generalizations of this question were made in the intervening years.  Most importantly, since the locally symmetric condition is clearly very restrictive, Takahashi \cite{Ta} introduced the notion of a locally $\phi$-symmetric space for Sasakian manifolds by restricting the locally symmetric condition to the contact subbundle and showed that these manifolds locally fiber over Hermitian symmetric spaces. The second author and Vanhecke \cite{BV} showed that this condition is equivalent to reflections in the integral curves of the Reeb vector field being isometries. For a general discussion of these ideas in real contact geometry we refer the reader to \cite{B}.
 
 In this paper we begin the study of these ideas for metric contact pairs (or bicontact manifolds). 
 In \cite{BH1} A. Hadjar and the first author introduced the notion of normality for a contact pair and we first show that for a locally symmetric normal metric contact pair, the universal covering space of such a manifold is a Calabi-Eckmann manifold, i.e. $S^{2m+1}(1)\times S^{2n+1}(1)$,

  We then study reflections in the integral submanifolds of the vertical subbundle of a normal metric contact pair. Suppose the induced foliation by the integral submanifolds of the vertical subbundle is regular giving us a fibration.  When such reflections are isometries we show that the manifold is the product of  locally $\phi$-symmetric spaces and fibers over a locally symmetric space with a symplectic pair structure.

\section{Preliminaries}

Contact pairs were introduced by G. D. Ludden, K. Yano and the second author in \cite{BLY} under the name
{\it bicontact} and by A. Hadjar and the first author in \cite{Bande1, BH} with the name {\it contact pair}.
A pair of 1-forms $(\a_1,\a_2)$ on a manifold $M$ is said to be a {\it contact pair of type $(m,n)$} if
\vskip4pt
\centerline{$\a_1\wedge(d\a_1)^m\wedge\a_2\wedge(d\a_2)^n$ is a volume form,}
\vskip4pt
\centerline{$(d\a_1)^{m+1}=0$ and $(d\a_2)^{n+1}=0$.}
\vskip4pt
\noindent
While it is possible to consider a contact pair of type (0,0) (see \cite{BH}), it seems most natural to require at least one of the forms to resemble a contact form.  Thus we adopt the convention that $m \geq 1$ and
$n \geq 0$.

We can naturally associate to a contact pair two subbundles 
$$\{ X: \a_i(X)=0, d\a_i(X,Y)=0\; \forall Y\},.\; i=1,2$$
These subbundles are integrable \cite {BH} and determine the {\it characteristic foliations} of $M$, denoted 
${\mathcal F}_1$ and ${\mathcal F}_2$ respectively.

The equations
\begin{eqnarray*}
&\alpha_1 (Z_1)=\alpha_2 (Z_2)=1  , \; \; \alpha_1 (Z_2)=\alpha_2
(Z_1)=0 \, , \\
&i_{Z_1} d\alpha_1 =i_{Z_1} d\alpha_2 =i_{Z_2}d\alpha_1=i_{Z_2}
d\alpha_2=0 \, ,
\end{eqnarray*}
where $i_X$ is the contraction with the vector field $X$, determine uniquely the two vector fields $Z_1$ and $Z_2$, called \emph{Reeb vector fields}. Since they commute, they give rise to a locally free $\mathbb{R}^2$-action, called  the \emph{Reeb action}.

A {\it contact pair structure} \cite{BH3} on a manifold $M$ is a triple
$(\alpha_1 , \alpha_2 , \phi)$, where $(\alpha_1 , \alpha_2)$ is a
contact pair and $\phi$ a tensor field of type $(1,1)$ such that:
$$\phi^2=-Id + \alpha_1 \otimes Z_1 + \alpha_2 \otimes Z_2 , \quad
\phi Z_1=\phi Z_2=0$$
where $Z_1$ and $Z_2$ are the Reeb vector fields of $(\alpha_1 ,
\alpha_2)$.

One can see that $\alpha_i \circ \phi =0$ for $i=1,2$ and that the rank of $\phi$ is
equal to $\dim M -2$.  Since we are also interested in the induced structures, we recall the notion of the decomposability of $\phi$ which we will assume throughout this paper.
The endomorphism $\phi$ is said to be {\it decomposable} \cite{BH3} if
$\phi (T\mathcal{F}_i) \subset T\mathcal{F}_i$, for $i=1,2$.

If $\phi$ is decomposable, then $(\alpha_1 , Z_1 ,\phi)$ (respectively
$(\alpha_2 , Z_2 ,\phi)$) induces, on every leaf of $\mathcal{F}_2$ (respectively $\mathcal{F}_1$), a contact form and the restriction  $\phi_i$ of $\phi$ to the leaf forms an almost contact structure 
$(\alpha_i, Z_i,\phi_i)$.

In \cite{BH1} A. Hadjar and the first author introduced the notion of {\it normality} for a contact pair structure $(\a_1,\a_2,\phi)$ as the integrability of two natural almost complex structures on $M$.  This is equivalent to the equation
$$N^1(X,Y)=:[\phi,\phi](X,Y)+2d\a_1(X,Y)Z_1+2d\a_2(X,Y)Z_2=0,$$
$[\phi,\phi]$ being the Nijenhuis tensor of $\phi$.
We also note the following tensors
$$N^2 _i (X,Y) =: (\pounds_{\phi X} \alpha_i) (Y) - (\pounds_{\phi Y}\alpha_i)(X) ,  \; i=1,2.$$

On manifolds endowed with contact pair structures it is natural
to consider the following metrics \cite{BH3}.
Let $(\alpha_1 , \alpha_2 ,\phi )$ be a contact pair structure on
a manifold $M$, with Reeb vector fields $Z_1$ and $Z_2$. A
Riemannian metric $g$ on $M$ is said to be \emph{associated} if 
$g(X, \phi Y)= (d \alpha_1 + d\alpha_2) (X,Y)$ and $g(X, Z_i)=\alpha_i(X)$, for $i=1,2$ and for
all $X,Y \in \Gamma (TM)$.

A {\it metric contact pair} on a manifold $M$ is a
four-tuple $(\alpha_1, \alpha_2, \phi, g)$ where $(\alpha_1,
\alpha_2, \phi)$ is a contact pair structure and $g$ an associated
metric with respect to it. The manifold $M$ with this structure will be called  a {\it metric contact pair}.

Observe that for a metric contact pair,  $(\alpha_1, \alpha_2, \phi, g)$, the endomorphism field $\phi$ is decomposable
if and only if the characteristic foliations $\mathcal{F}_1 ,
\mathcal{F}_2$ are orthogonal \cite{BH2}. In this case $(\alpha_i, \phi, g)$ induces a contact metric structure $(\alpha_i, \phi_i, g)$ on the leaves of $\mathcal{F}_j$ , for $j\neq i$ .
Moreover by the normality each $(\alpha_i, \phi_i, g)$ is a Sasakian structure on each leaf.

For a normal metric contact pair, $(\alpha_1, \alpha_2, \phi , g)$, with decomposable $\phi$ we have
(see \cite{BH1})
$$N^2 _1=N^2 _2=0,\quad \pounds_{Z_1}\phi=\pounds_{Z_2}\phi=0.$$
Moreover the vector fields $Z_1$ and $Z_2$ are Killing \cite{BH2}.

Also the vector field $Z=Z_1+Z_2$ plays an important role.   In particular we have the following basic formulas \cite{BH2}.
$$2 g((\nabla_X \phi)Y, W)= g \left (N^1 (Y,W), \phi X \right )$$
$$+ 2 \sum_{i=1} ^2 \bigl (d\alpha_i (\phi Y , X) \alpha_i (W)
- d\alpha_i (\phi W,X) \alpha_i (Y)\bigr ),\eqno(2.1)$$
$$\nabla_XZ=-\phi X,\quad R_{XZ}Z=-\phi^2X.$$

\begin{lemma} On a normal metric contact pair, for horizontal vector fields $X$ and $W$
we have
$$R_{W\,Z}X=-d\a_1(\phi W,X)Z_1-d\a_2(\phi W,X)Z_2.$$
In particular $R_{\phi X\,Z}X=0$.
\end{lemma}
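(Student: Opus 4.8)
The plan is to combine the two structural identities available on a normal metric contact pair — namely $\nabla_X Z=-\phi X$ and formula $(2.1)$ for $(\nabla_X\phi)Y$ — with the vanishing $N^1=0$ coming from normality and the algebraic symmetries of the Riemann curvature tensor.

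First I would record, for \emph{arbitrary} vector fields $X,Y$, the identity
$$R_{XY}Z=(\nabla_Y\phi)X-(\nabla_X\phi)Y.$$
This is immediate from $R_{XY}Z=\nabla_X\nabla_Y Z-\nabla_Y\nabla_X Z-\nabla_{[X,Y]}Z$: substitute $\nabla_\bullet Z=-\phi\,\bullet$, expand each $\nabla_X(\phi Y)=(\nabla_X\phi)Y+\phi\nabla_X Y$, and observe that the leftover terms $-\phi\nabla_X Y+\phi\nabla_Y X+\phi[X,Y]$ cancel because $\nabla$ is torsion-free.

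Next, for horizontal $X,W$ (so $\a_i(X)=\a_i(W)=0$) and an arbitrary $V$, I would reach the desired form via the pair symmetry of $R$:
$$g(R_{WZ}X,V)=g(R_{XV}W,Z)=-g(R_{XV}Z,W)=g((\nabla_X\phi)V,W)-g((\nabla_V\phi)X,W),$$
the last equality using the identity above with $Y$ replaced by $V$. Now I insert $(2.1)$: since $M$ is normal, $N^1=0$, so $(2.1)$ reduces to
$$g((\nabla_A\phi)B,C)=\sum_{i=1}^2\bigl(d\a_i(\phi B,A)\,\a_i(C)-d\a_i(\phi C,A)\,\a_i(B)\bigr).$$
Applying this to the two terms and using $\a_i(X)=\a_i(W)=0$, the first term becomes $-\sum_i d\a_i(\phi W,X)\,\a_i(V)$ and the second term vanishes identically. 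Since $\a_i(V)=g(Z_i,V)$ and $V$ is arbitrary, this yields $R_{WZ}X=-d\a_1(\phi W,X)Z_1-d\a_2(\phi W,X)Z_2$.

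For the final assertion I would set $W=\phi X$, which is again horizontal since $\a_i\circ\phi=0$, and use $\phi^2X=-X$ for horizontal $X$; then $d\a_i(\phi^2X,X)=-d\a_i(X,X)=0$ by skew-symmetry, so $R_{\phi X\,Z}X=0$. The only step needing a little care is the bookkeeping of the curvature symmetries in the first display; the rest is a direct substitution, so I do not expect a genuine obstacle.
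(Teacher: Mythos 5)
Your proposal is correct and follows essentially the same route as the paper: both derive $R_{XY}Z=(\nabla_Y\phi)X-(\nabla_X\phi)Y$ from $\nabla_\bullet Z=-\phi\,\bullet$, insert formula (2.1) with $N^1=0$, and invoke the pair symmetry of the curvature tensor, the only difference being that you apply the symmetry before substituting (2.1) rather than after. The final step with $W=\phi X$ and $\phi^2X=-X$ for horizontal $X$ is also the intended argument.
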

\begin{proof} Since $\nabla_YZ=-\phi Y$ we have
$$R_{X\,Y}Z=-\nabla_X\phi Y+\nabla_Y\phi X-\phi[X,Y]
=-(\nabla_X\phi)Y+(\nabla_Y\phi)X.$$
Taking the inner product with a vector $W$ and using equation (2.1) for the covariant derivative of $\phi$ one
readily obtains
$$g(R_{X\,Y}Z,W)=d\a_1(\phi W,X)\a_1(Y)+d\a_2(\phi W,X)\a_2(Y)$$
$$-d\a_1(\phi W,Y)\a_1(X)-d\a_2(\phi W,Y)\a_2(X)$$
$$=-g(R_{W\,Z}X,Y).$$
Now taking $X$ and $W$ horizontal and $Y$ arbitrary, the result follows.
\end{proof}

We close this section with a brief description of the equations of submanifold theory.  For a submanifold of a Riemannian manifold $(M,g)$ we will use the same letter for the induced metric but denote the induced connection by $\dot\nabla$.  The second fundamental form will be denoted by $\s$ and the Weingarten map for a given unit normal $W$ will be denoted by $A_W$.  We denote by $\nabla^\perp$ the connection in the normal bundle. Let $X,Y,U,V$ be tangent vectors and $W_i, W_2$ unit normals. Define the covariant derivative of $\s$, $\nabla'\s$ by
$$(\nabla'\s)(X,Y,U)=\nabla^\perp\s(Y,U)-\s(\nabla_XY,U)+\s(Y,\nabla_XU).$$
The equations of Gauss, Codazzi and Ricci-K\"uhne are then respectively
$$R(X,Y,U,V)=\dot R(X,Y,U,V)+g(\s(X,U),\s(Y,V))-g(\s(Y,U),\s(X,V)),$$
$$(R_{XY}U)^\perp=(\nabla'\s)(X,Y,U)-(\nabla'\s)(Y,X,U),$$
$$R(X,Y,W_1,W_2)=R^\perp(X,Y,W_1,W_2)-g([A_{W_1},A_{W_2}]X,Y).$$

\section{Locally Symmetric Normal Metric Contact Pairs}

In this section we prove that a complete, locally symmetric, normal, metric contact pair is either compact and
its universal covering space is a Calabi-Eckmann manifold, $S^{2m+1}(1)\times S^{2n+1}(1)$, or  its universal covering space is $S^{2m+1}(1)\times{\mathbb R}$.
There are several aspects to the proof, namely that the characteristic foliations are totally geodesic, 
that at least one of the factor spaces has constant curvature +1, and that we can lift the resulting local Riemannian product structure to the universal covering space..  One might approach this problem at some point in the proof by noting that local symmetry implies that the eigenspaces of the Ricci tensor are integrable with totally geodesic leaves and that the leaves are already known to be Sasakian manifold which are irreducible (\cite{Tanno}). If after computing the curvature, one would then, more or less, be done except for the case that the manifold is Einstein which occurs for a Calabi-Eckmann manifold where the unit spheres have the same dimension.  We therefore prove our result directly.

\begin{theorem} Let $M$ be a complete, locally symmetric, normal, metric contact pair.  Then either the universal covering space of $M$ is a Calabi-Eckmann manifold, $S^{2m+1}(1)\times S^{2n+1}(1)$, and $M$ is compact, or 
the universal covering space of $M$ is $S^{2m+1}(1)\times{\mathbb R}$.
\end{theorem}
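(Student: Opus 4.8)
The plan is to exploit the formula $\nabla_X Z = -\phi X$ together with the curvature identities recorded in Section~2 and the hypothesis $\nabla R = 0$. First I would show that the characteristic foliations $\mathcal{F}_1$ and $\mathcal{F}_2$ are totally geodesic. Since each leaf of $\mathcal{F}_2$ carries the induced Sasakian structure $(\alpha_1,\phi_1,g)$, and similarly for $\mathcal{F}_1$, one expects the leaves to sit as totally geodesic submanifolds; I would verify this by computing $g(\nabla_X Y, W)$ for $X,Y$ tangent to a leaf and $W$ normal to it, using decomposability ($\phi T\mathcal{F}_i\subset T\mathcal{F}_i$) and the explicit covariant derivative (2.1). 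Once the foliations are totally geodesic and mutually orthogonal, $TM$ splits orthogonally as $T\mathcal{F}_1\oplus T\mathcal{F}_2$ with both distributions parallel, so locally $M$ is a Riemannian product $M_1\times M_2$ where $M_i$ is the leaf of $\mathcal{F}_j$, $j\neq i$, hence locally symmetric and Sasakian.

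Next I would invoke the classification of locally symmetric Sasakian manifolds: by Okumura's theorem (\cite{O}), a locally symmetric Sasakian manifold is locally isometric to the unit sphere $S^{2k+1}(1)$. Applying this to each factor: the factor of type coming from $\alpha_1$ (which genuinely resembles a contact form, as $m\geq 1$) is locally $S^{2m+1}(1)$; the factor coming from $\alpha_2$ is locally $S^{2n+1}(1)$ if $n\geq 1$, but when $n=0$ the second factor is one-dimensional, i.e. locally $\mathbb{R}$. This already gives the local picture $S^{2m+1}(1)\times S^{2n+1}(1)$ or $S^{2m+1}(1)\times\mathbb{R}$, which is the Calabi–Eckmann situation.

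To pass from local to global I would use completeness. By the de~Rham decomposition theorem applied to the universal covering $\tilde M$ (which is complete, simply connected, and inherits the parallel orthogonal splitting), $\tilde M$ is a genuine Riemannian product $\tilde M_1\times\tilde M_2$ of complete simply connected locally symmetric manifolds. A complete simply connected manifold locally isometric to $S^{2m+1}(1)$ is globally $S^{2m+1}(1)$, and one locally isometric to $S^{2n+1}(1)$ or $\mathbb{R}$ is globally $S^{2n+1}(1)$ or $\mathbb{R}$ respectively. Hence $\tilde M\cong S^{2m+1}(1)\times S^{2n+1}(1)$ or $\tilde M\cong S^{2m+1}(1)\times\mathbb{R}$. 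In the first case $\tilde M$ is compact, and since $M=\tilde M/\pi_1(M)$ with $\pi_1(M)$ acting freely and properly discontinuously by isometries, the covering is finite-sheeted, so $M$ itself is compact; in the second case $M$ need not be compact and we simply record $\tilde M\cong S^{2m+1}(1)\times\mathbb{R}$.

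The main obstacle I anticipate is the first step: proving that the characteristic foliations are totally geodesic and that the splitting $T\mathcal{F}_1\oplus T\mathcal{F}_2$ is parallel. This requires a careful computation with (2.1), the vanishing of $N^1$, $N^2_i$, the facts $\pounds_{Z_i}\phi=0$ and that $Z_1,Z_2$ are Killing, and it is here that one must genuinely use that the metric contact pair is normal rather than merely metric; the curvature identities $\nabla_X Z=-\phi X$ and $R_{XZ}Z=-\phi^2 X$, together with Lemma~2.1, should be the tools that force the Reeb directions (and hence the leaves) into totally geodesic position. A secondary subtlety is that the invocation of Okumura's theorem is on the \emph{leaves}, which are only locally symmetric because $\nabla R=0$ on $M$ together with the totally geodesic property transfers to $\dot\nabla\dot R=0$ via the Gauss equation; I would make this transfer explicit before quoting \cite{O}.
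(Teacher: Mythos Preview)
Your overall architecture matches the paper's: show the characteristic foliations are totally geodesic, deduce that each leaf is a locally symmetric Sasakian manifold of constant curvature $+1$, and then globalize via a de~Rham type splitting of the universal cover. Once the leaves are totally geodesic your route through Okumura's theorem is a perfectly legitimate alternative to the paper's direct computation of sectional curvature $+1$ on the leaves, and de~Rham in place of Blumenthal--Hebda is also fine.

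The genuine gap is in the first step. You propose to obtain total geodesicity of the leaves from the normality package alone: formula (2.1), $N^1=N^2_i=0$, $\pounds_{Z_i}\phi=0$, $Z_i$ Killing, $\nabla_X Z=-\phi X$, $R_{XZ}Z=-\phi^2 X$, and Lemma~2.1. But all of these hold on \emph{every} normal metric contact pair with decomposable $\phi$, and such manifolds are not in general local Riemannian products; if your list sufficed, Theorem~4.2 would be vacuous, since most of its proof is devoted to extracting total geodesicity from the reflection hypothesis. In the paper's proof the hypothesis $\nabla R=0$ is used essentially and repeatedly for this step: first the semi-symmetric consequence $R\cdot R=0$ is exploited with carefully chosen arguments (some equal to $Z$, some horizontal in $T\mathcal{F}_1$ or $T\mathcal{F}_2$) to produce the curvature identities (3.1)--(3.8); these, combined with the Codazzi and Ricci--K\"uhne equations, give $A_{Z_2}=0$ and $\sigma(\cdot,Z_1)=0$; and finally the full condition $\nabla R=0$ itself (not just $R\cdot R=0$) is applied to kill the remaining horizontal components of the Weingarten map. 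Your plan to ``compute $g(\nabla_X Y,W)$ using (2.1)'' cannot close this; (2.1) controls $\nabla\phi$, not $\nabla$, and the missing information is precisely the curvature input coming from local symmetry. You should rewrite the first step so that $\nabla R=0$ (or at least $R\cdot R=0$) enters explicitly and early, before any appeal to Okumura.
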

\begin{proof} We begin with the observation that since our manifold is locally symmetric, it is semi-symmetric, i.e. $R\cdot R=0$, so that
$$R(R_{X\,Y}X_1,X_2,X_3,X_4)+R(X_1,R_{X\,Y}X_2,X_3,X_4)$$
$$+R(X_1,X_2,R_{X\,Y}X_3,X_4)+R(X_1,X_2,X_3,R_{X\,Y}X_4)=0.$$
Taking $X_1=X_3=Y=Z$ and $X$ and $X_2$ horizontal, and recalling that $R_{X\,Z}Z=-\phi^2 X$, we have
$$0=R(X,X_2,Z,X_4)+R(Z,X_4,Z,R_{X\,Z}X_2)$$
$$+R(Z,X_2,X,X_4)+R(Z,X_2,Z,R_{X\,Z}X_4)$$
$$=R(X,X_2,Z,X_4)+g(\phi^2X_4,R_{X\,Z}X_2)+R(Z,X_2,X,X_4)-g(X_2,R_{X\,Z}X_4).$$
Expanding $\phi^2X_4$ this yields
$$2R(X,X_2,Z,X_4)-R(X,Z,X_2,X_4)$$
$$+\a_1(X_4)R(X,Z,X_2,Z_1)+\a_2(X_4)R(X,Z,X_2,Z_2)=0.$$
Taking $X_4=Z_1$, we have
$R(X,X_2,Z_1,Z_2)=0$ which together with the Lemma imply that the curvature for horizontal vectors $X,Y$ satisfies 
$$R_{X\,Y}Z=0.\eqno(3.1)$$

Now set $Y=Z$. Making the choice $X_1=X$, denoting $X_2$ by $Y$, $X_3$ by $U$ all horizontal, and setting $X_4=Z$, 
$R\cdot R=0$ gives
$$0=R(R_{X\,Z}X,Y,U,Z)+R(X,R_{X\,Z}Y,U,Z)$$
$$+R(X,Y,R_{X\,Z}U,Z)+R(X,Y,U,X).$$
The third term vanishes by (3.1). If now $X,Y,U$ are in $T{\mathcal F}_2$ the Lemma gives
$$R(X,Y,U,X)=g(X,X)g(U,Y)-g(U,X)g(X,Y).\eqno(3.2)$$
In particular if $Y=U\perp  X$, we have that the sectional  curvature
$$K(X,Y)=1.\eqno (3.3)$$

Now consider a leaf $L$ of the foliation ${\mathcal F}_2$ as a submanifold.  First of all $\nabla_{Z_1} Z_1=0$
gives $\dot\nabla_{Z_1} Z_1=0$ and $\s(Z_1,Z_1)=0$.
For $X$ tangent to the leaf we compute
$\nabla_XZ$ in two ways:
$$\nabla_XZ=-\phi X=-\phi_1X,$$
$$\nabla_XZ=\nabla_XZ_1+\nabla_XZ_2
=\dot\nabla_XZ_1+\s(X,Z_1)-A_{Z_2}X+\nabla_X^\perp Z_2.$$
Since the induced structure is Sasakian, we already know that $\dot\nabla_XZ_1=-\phi_1X$ and we then have that 
$$A_{Z_2}=0\;{\rm and}\;\s(X,Z_1)=-\nabla_X^\perp Z_2.\eqno (3.4)$$

For two vectors $X$ and $Y$ tangent to $L$ and a normal $W$ together with the normal $Z_2$ we have the equation of Ricci-K\"uhne
$$g(R_{X\,Y}Z_2,W)=g(R_{X\,Y}^\perp Z_2,W)+g([A_{Z_2},A_W]X,Y).$$
Using (3.4) this becomes
$$g(R_{X\,Y}Z_2,W)=g(-\nabla_X^\perp\s(Y,Z_1)+\nabla_Y^\perp\s(X,Z_1)+\s([X,Y],Z_1),W)$$
$$=g(-(\nabla'\s)(X,Y,Z_1)-\s(Y,-\phi_1X)+(\nabla'\s)(Y,X,Z_1)+\s(X,-\phi_1Y),W)$$
$$=-g(R_{X\,Y}Z_1,W)+g(\s(Y,\phi_1X)-\s(X,\phi_1Y),W)$$
by the Codazzi equation.
Combining the curvature terms and using the Lemma we readily have $g(R_{X\,Y}Z,W)=0$
and hence that 
$\s(X,\phi_1Y)=\s(\phi_1X,Y)$ and in turn that $\s(X,Z_1)=0$.
We know that $L$ with the induced structure is Sasakian and hence that $\dot K(X,Z_1)=1$;
using the Gauss equation and $\s(X,Z_1)=0$ we therefore have 
$$K(X,Z_1)=1.\eqno(3.5)$$

A plane section oblique to $Z_1$ is spanned by a horizontal unit vector $X$ and a unit vector of the form
$aZ_1+bY$, $a^2+b^2=1$ where $Y$ is orthogonal to both $Z_1$ and $X$.
From the Gauss equation we have
$$R(X,Y,Z_1,X)=\dot R(X,Y,Z_1,X)-\s(Y,Z_1)\s(X,X)+\s(X,Z_1)\s(X,Y),$$
but on a Sasakian manifold $\dot R_{X\,Y}Z_1$ vanishes for  $X$ and $Y$ horizontal and we have already seen that $\s$ vanishes on $Z_1$. 
Therefore $R(X,Y,Z_1,X)$ vanishes and we have that the sectional curvature of the oblique section is also +1. Thus, since sectional curvatures of a curvature tensor on a vector space determine that tensor, we have
that the curvature of the ambient metric restricted to a tangent space to $L$ satisfies
$$R(X,Y,U,V)=g(Y,U)g(X,V)-g(X,U)g(Y,V).\eqno(3.6)$$

Returning to the semi-symmetric condition, the choices $X,X_1,X_2$ horizontal in $T{\mathcal F}_2$, $X_3$
horizontal in $T{\mathcal F}_1$ and $X_4=Y=Z$, we have 
$$R(X_1,X_2,X_3,X)=0$$
 by the Lemma.  The Codazzi equation gives the same with $X_3=Z_2$.  Therefore for horizontal tangents $X,Y,U$ and any normal $W$
$$R(X, Y,U,W)=0.\eqno (3.7)$$
Similarly for  $X,Y$ horizontal tangents and $W_1, W_2$ horizontal normals,
 we have
$$R(X,W_1,W_2,Y)=0.\eqno(3.8)$$

We now utilize the condition of local symmetry.  Choose $X_4$ horizontal in $T{\mathcal F}_1$ and the other vector fields to be horizontal tangents, then we have
$$0=XR(X_1,X_2,X_3,X_4)-R(\nabla_XX_1,X_2,X_3,X_4)-(X_1,\nabla_XX_2,X_3,X_4)$$
$$-R(X_1,X_2,\nabla_XX_3,X_4)-R(X_1,X_2,X_3,\nabla_XX_4).$$
The first term vanishes by (3.7).  For the second term decompose $\nabla_XX_1$
as $\dot\nabla_XX_1+\s(X,X_1)$, then the second term vanishes by (3.7) and (3.8).  The third and fourth terms vanish in the same manner.  Using (3.7) again, the last term yields
$$R(X_1,X_2,X_3,-A_{X_4}X)=0.$$  Applying equation (3.6) with$X_2=X_3\perp X_1$ yields
$g(X_1,A_{X_4}X)=0$.
Since $X_4$ was an arbitrary normal and we saw earlier that $A_{Z_2}=0$, we have that $\s$
vanishes on horizontal tangent vectors. We have also seen that $\s(Z_1,X)=0$ for any $X$.  Therefore the leaves of
${\mathcal F}_2$ are totally geodesic submanifolds.  By the same argument the leaves of ${\mathcal F}_1$ are totally geodesic giving $M$ a local  Riemannian product structure.  

Furthermore the leaves of the foliations are Sasakian manifolds of constant curvature +1 or possibly the second factor space is 1-dimensional. 
Since $M$ is complete, the leaves are complete because they are totally geodesic.  In the first case, since they have constant curvature +1, their universal coverings are unit spheres. By a theorem of Blumenthal and Hebda \cite{BluHeb} the universal covering is then the product of two spheres and $M$ is compact.  Similarly in the second case the universal covering is $S^{2m+1}(1)\times{\mathbb R}$.
\end{proof}
As was shown in \cite{BK}, the existence of a normal MCP of type $(h,0)$ on a manifold is equivalent to saying that the manifold is a non-K\"ahler Vaisman manifold. Then we have:

\begin{corollary} 
Let $M$ be a complete locally symmetric non-K\"ahler Vaisman manifold. Then the Riemannian universal covering of $M$ is (up to constant scale of the metric) isometric to $S^{2m+1}(1)\times{\mathbb R}$.
\end{corollary}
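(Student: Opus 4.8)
The plan is to reduce the statement to Theorem 3.1 by unpacking the equivalence established in \cite{BK}. First I would recall that, by \cite{BK}, a manifold carries a normal metric contact pair of type $(h,0)$ precisely when it is a non-K\"ahler Vaisman manifold; moreover this correspondence is compatible with the Riemannian metric, so the Vaisman metric on $M$ is (up to a constant scale) an associated metric for the induced normal metric contact pair structure $(\alpha_1,\alpha_2,\phi,g)$ with $m\geq 1$ and $n=0$. Thus the hypothesis that $M$ is complete and locally symmetric says exactly that $M$ is a complete, locally symmetric, normal metric contact pair, which is the setting of Theorem 3.1.

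Next I would apply Theorem 3.1 directly. It gives two possibilities for the Riemannian universal covering $\tilde M$: either $\tilde M$ is the Calabi--Eckmann manifold $S^{2m+1}(1)\times S^{2n+1}(1)$ with $M$ compact, or $\tilde M$ is $S^{2m+1}(1)\times\mathbb{R}$. The point is that the first alternative is excluded here: with $n=0$ the sphere factor $S^{2n+1}(1)=S^1$ is one-dimensional, and the argument in the proof of Theorem 3.1 shows that when the second characteristic foliation has one-dimensional leaves the covering is $S^{2m+1}(1)\times\mathbb{R}$ rather than a product of two genuine spheres. (Equivalently: a type $(m,0)$ structure always falls into the ``second factor is $1$-dimensional'' branch of the dichotomy, since $n=0$ forces $\dim\mathcal{F}_1$-leaves $=2\cdot 0+1=1$.) Hence $\tilde M$ is isometric, up to the constant rescaling coming from \cite{BK}, to $S^{2m+1}(1)\times\mathbb{R}$.

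The main obstacle — really the only point requiring care — is bookkeeping the metric rescaling: the equivalence of \cite{BK} matches a Vaisman structure with an \emph{associated} metric of a metric contact pair only after normalizing constants (the Lee form length, the scale of $d\alpha_1$), so one must check that Theorem 3.1 is being applied to the correctly scaled metric and then carry that scale factor back into the final isometry statement. Once this is tracked, everything else is immediate from Theorem 3.1, and there is nothing further to compute.
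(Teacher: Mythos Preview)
Your proposal is correct and follows essentially the same approach as the paper: the paper simply remarks (in the sentence preceding the corollary) that by \cite{BK} a non-K\"ahler Vaisman manifold is exactly a normal metric contact pair of type $(h,0)$, and then the corollary is immediate from Theorem~3.1. Your additional care in identifying that the $n=0$ case forces the $S^{2m+1}(1)\times\mathbb{R}$ branch of the dichotomy, and in tracking the metric rescaling, makes explicit what the paper leaves to the reader.
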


\section{Reflections in the Vertical Foliation}

As we have seen the condition of local symmetry for a normal metric contact pair is extremely strong.  We therefore consider a weaker, but very geometric, condition in terms of local reflections in the integral submanifolds of the vertical subbundle. To do this we first recall the notion of a local reflection in a submanifold.
Given a Riemannian manifold $(M,g)$ and a submanifold $N$, {\it local reflection} in $N$, $\varphi_N$, is defined as follows.  For $m\in M$ sufficiently close to $N$ consider the minimal geodesic from $m$ to $N$ meeting $N$ orthogonally at $p$.  Let $X$ be the unit vector at $p$ tangent to the geodesic in the direction toward $m$.  Then
$\varphi_N$ maps $m=\exp_p(tX)\longrightarrow\exp_p(-tX)$.  In \cite{CV} Chen and Vanhecke gave the following necessary and sufficient conditions for a reflection to be isometric.

\begin{theorem}[Chen \& Vanhecke]  Let $(M,g)$ be a Riemannian manifold and $N$ a submanifold.  Then
the reflection  $\varphi_N$ is a local isometry if and only if
\begin{enumerate}
\item
$N$ {\rm  is totally geodesic;}
\item
$(\nabla^{2k}_{X\cdots X}R)(X,Y)X$  {\rm  is normal to N,}

$(\nabla^{2k+1}_{X\cdots X}R)(X,Y)X$  {\rm   is tangent to N and}

$(\nabla^{2k+1}_{X\cdots X}R)(X,V)X$  {\rm  is normal to N}
\end{enumerate}
for all vectors $X$, $Y$ normal to $N$ and vectors $V$ tangent to $N$ and all $k\in{\mathbb N}$.

\end{theorem}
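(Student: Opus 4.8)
I have not yet seen the authors' argument; here is the line of attack I would take. Since $\varphi_N$ sends $\exp_p(tX)$ to $\exp_p(-tX)$, its fixed point set near $N$ is exactly $N$, so if $\varphi_N$ is a local isometry then $N$, being locally the fixed point set of an isometry, is totally geodesic; this is condition $(1)$, and it must be assumed for the rest. Granting it, the differential $d(\varphi_N)_p$ at $p\in N$ fixes $T_pN$ and reverses every orthogonal geodesic direction, hence is $+\mathrm{id}$ on $T_pN$ and $-\mathrm{id}$ on $\nu_pN$. Because a diffeomorphism is a local isometry precisely when its differential is a linear isometry at every point, and every point near $N$ lies on a unique unit-speed geodesic $\gamma(t)=\exp_p(tX)$ with $p\in N$ and $X\in\nu_pN$, the whole problem reduces to controlling $d\varphi_N$ along such geodesics.

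Along a fixed $\gamma$, I would describe $T_{\gamma(t)}M$ using adapted Jacobi fields, i.e. variation fields of families $\exp_{c(s)}(tX_s)$ of geodesics leaving $N$ orthogonally. These split into fields $E_V$ with $E_V(0)=V\in T_pN$ and $E_V'(0)=0$ (this is where $N$ totally geodesic enters, since $E_V'(0)$ would otherwise carry the second fundamental form) and fields $F_Y$ with $F_Y(0)=0$ and $F_Y'(0)=Y\in\nu_pN\cap X^\perp$; together with $\gamma'$ they span $T_{\gamma(t)}M$. In Fermi coordinates $\varphi_N$ carries $\exp_{c(s)}(tX_s)$ to $\exp_{c(s)}(-tX_s)$, so $d\varphi_N\bigl(J(t)\bigr)=J(-t)$ for every adapted Jacobi field $J$ and $d\varphi_N\bigl(\gamma'(t)\bigr)=-\gamma'(-t)$. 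Hence $\varphi_N$ is a local isometry if and only if, for every $p$, every unit $X\in\nu_pN$ and all adapted $J_1,J_2$, the scalar function $t\mapsto g(J_1(t),J_2(t))$ is even; the $\gamma'$ slot adds nothing, because $g(\gamma',J)\equiv0$ for adapted $J$.

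By bilinearity this evenness need only be imposed on $g(E_V,E_W)$, on $g(E_V,F_Y)$ and on $g(F_Y,F_{Y'})$. Differentiating the Jacobi equation $J''=-R(J,\gamma')\gamma'$ repeatedly at $t=0$ writes $J^{(j)}(0)$ in terms of $(\nabla^{\,j-2}_{X\cdots X}R)$ applied to the initial data (and to lower-order initial values), so every Taylor coefficient of $g(J_1,J_2)$ becomes a universal expression in the iterated derivatives $\nabla^{k}_{X\cdots X}R$ evaluated on $X$ and on the parallel transports of the initial vectors. Annihilating the odd-order coefficients and symmetrizing with the Bianchi and pair symmetries, the first genuine conditions are: $R(X,Y)X$ normal to $N$ for $X,Y\in\nu N$ (equivalently $R(X,V)X$ tangent for $V$ tangent) from $g(E_V,F_Y)$; $(\nabla_X R)(X,V)X$ normal for $V$ tangent from $g(E_V,E_W)$; and $(\nabla_X R)(X,Y)X$ tangent for $X,Y\in\nu N$ from $g(F_Y,F_{Y'})$. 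Carrying this to all orders is meant to produce exactly the three families in $(2)$ indexed by $k\in\mathbb N$, one family from each of the three types of pairing.

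The main obstacle, as usual with such reflection characterisations, is the sufficiency half of the last step: one must verify that the recursion producing the higher Taylor coefficients never couples the three families so as to generate an additional obstruction, i.e. that once $(2)$ holds every higher mixed coefficient automatically has the correct parity. This should go through because, once the low-order conditions hold, the lower-order initial values feeding the recursion already sit in compatible slots ($E_V^{(\mathrm{odd})}(0)$ normal, $F_Y^{(\mathrm{odd})}(0)$ tangent), and it is precisely here that restricting all differentiations to the single direction $X$ keeps the bookkeeping closed. The only extra input is the classical fact that a quadratic form on a vector space, together with the curvature symmetries, recovers the full tensor, which is what licenses phrasing everything through $(\nabla^{k}_{X\cdots X}R)(X,\cdot)X$ instead of the unrestricted covariant derivatives.
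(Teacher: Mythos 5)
A preliminary remark: the paper itself contains no proof of this statement; it is quoted from Chen and Vanhecke \cite{CV}, and later only the necessity direction with $k=0$ (namely that $(\nabla_XR)(X,Y)X$ is tangent to $N$ for $X,Y$ normal to $N$) is actually used. Your framework coincides with the one behind the original proof: $N$ is the local fixed point set, hence totally geodesic, $d\varphi_N$ acts on adapted ($N$-)Jacobi fields by $J(t)\mapsto J(-t)$, the isometry property is equivalent to evenness of $t\mapsto g(J_1(t),J_2(t))$ along normal geodesics, and the Taylor coefficients are generated by iterating the Jacobi equation. Your low-order computations are also right: the $k=0$ conditions do come from the pairings $g(E_V,F_Y)$, $g(E_V,E_W)$ and $g(F_Y,F_{Y'})$ exactly as you assign them.

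There are, however, two genuine gaps. First, the whole content of the theorem is the all-orders statement, and you only assert that the recursion ``never couples the three families.'' Each odd Taylor coefficient of $g(J_1,J_2)$ is a sum of contractions of several $\nabla^jR$ of different orders $j$ against lower-order initial data, so both implications require an actual induction: for necessity one must isolate the new condition at each order modulo the previously obtained ones, and for sufficiency one must check that the previously obtained conditions annihilate every remaining term. That bookkeeping \emph{is} the proof and it is not carried out. Second, and more seriously, the sufficiency half as you set it up cannot close in the smooth category: vanishing of all odd Taylor coefficients at $t=0$ gives evenness of $g(J_1(t),J_2(t))$ only when the data are real-analytic. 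The statement is in fact false for merely smooth metrics: starting from flat space with $N$ a linear subspace, perturb the metric in a sequence of disjoint balls lying on one side of $N$ and accumulating at a single point $p\in N$, with amplitudes decaying so fast that the perturbation is flat to infinite order at $p$; then $N$ is still totally geodesic and all conditions in (2) hold along $N$, yet $\varphi_N$ is not an isometry on any neighborhood of $p$. Chen and Vanhecke prove the theorem in the real-analytic setting (a hypothesis suppressed in the paper's statement), so you must either add analyticity or replace the power-series sufficiency argument by a different one (say of Cartan--Ambrose--Hicks type). The necessity direction, which is all the paper needs, does survive your argument as sketched in the smooth setting.
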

Next recall that a foliation is {\it regular} if each point of the manifold has a neighborhood such that any leaf passing through the neighborhood passes through only once.         

We now prove the main result of this section.

\begin{theorem} 
Let $M$ be a complete, simply connected, normal metric contact pair and suppose that the foliation induced by vertical subbundle is regular.   If reflections in the integral submanifolds of the vertical subbundle are isometries, then the manifold is the product of  globally $\phi$-symmetric spaces
or, in the case of type $(m,0)$, the product of a globally $\phi$-symmetric space and $\mathbb R$.
Moreover $M$  fibers over a locally symmetric space with a symplectic pair structure.
\end{theorem}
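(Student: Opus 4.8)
The plan is to combine the Chen--Vanhecke criterion with a de Rham splitting and then invoke the Blair--Vanhecke characterization of $\phi$-symmetric Sasakian spaces. First I would apply part (1) of the Chen--Vanhecke theorem to the $2$-dimensional leaves $L$ of the vertical foliation (the integral submanifolds of $\mathrm{span}(Z_1,Z_2)$): the hypothesis that $\varphi_L$ is a local isometry forces each such $L$ to be totally geodesic. Next I would feed the curvature conditions of part (2) — taken with $X,Y$ horizontal and $V$ vertical, and in their first-order instances ($k=0$), namely $R(X,Y)X$ normal, $(\nabla_X R)(X,Y)X$ tangent, $(\nabla_X R)(X,V)X$ normal — into the structure equations of a normal metric contact pair: the Lemma, formula (2.1), and $\nabla_X Z=-\phi X$. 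The target of this computation is that both characteristic foliations $\mathcal F_1$ and $\mathcal F_2$ are totally geodesic. This is the analogue of the totally-geodesic step of Theorem 3.1, and the curvature bookkeeping proceeds along similar lines, except that the full strength of local symmetry is replaced by the weaker vertical-direction reflection identities, and one must verify these still annihilate the second fundamental forms of the characteristic leaves.

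Granting that, $TM$ splits orthogonally as $T\mathcal F_1\oplus T\mathcal F_2$ by decomposability of $\phi$, into two integrable distributions with totally geodesic leaves, hence into two parallel distributions, so $M$ is locally a Riemannian product of a leaf of $\mathcal F_2$ and a leaf of $\mathcal F_1$. Since $M$ is complete and simply connected, the de Rham decomposition theorem upgrades this to a global product $M=M_1\times M_2$ with $M_1,M_2$ complete and simply connected; $M_1$ carries the induced Sasakian structure $(\alpha_1,Z_1,\phi_1,g)$ and $M_2$ the structure $(\alpha_2,Z_2,\phi_2,g)$, with $M_2\cong\mathbb R$ in the degenerate case $n=0$. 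Each vertical leaf is then a product $\gamma_1\times\gamma_2$ of a $Z_1$-orbit in $M_1$ and a $Z_2$-orbit in $M_2$, and in the product metric $\varphi_{\gamma_1\times\gamma_2}=\varphi_{\gamma_1}\times\varphi_{\gamma_2}$; since this is an isometry of $M_1\times M_2$, restricting to the slices $M_1\times\{q_2\}$ and $\{q_1\}\times M_2$ shows each $\varphi_{\gamma_i}$ is an isometry of $M_i$. By the Blair--Vanhecke characterization this says precisely that each $M_i$ is a complete, simply connected, hence globally, $\phi$-symmetric Sasakian space.

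For the fibration statement: regularity of the vertical foliation of $M$ forces the Reeb foliation of each $M_i$ to be regular, so by Takahashi's theorem each $M_i$ fibers over a Hermitian symmetric space $N_i$ (with $N_2$ a point when $n=0$). Hence the vertical fibration $M=M_1\times M_2\to N_1\times N_2=:B$ displays $M$ over a product of Hermitian symmetric spaces, which is locally symmetric. On $B$ the $2$-forms $\omega_i$ with $\pi^*\omega_i=d\alpha_i$ are well defined and closed because $i_{Z_j}d\alpha_i=0$ and $\mathcal L_{Z_j}d\alpha_i=0$; the identities $(d\alpha_1)^{m+1}=(d\alpha_2)^{n+1}=0$ together with the fact that $\alpha_1\wedge(d\alpha_1)^m\wedge\alpha_2\wedge(d\alpha_2)^n$ is a volume form on $M$ give $\omega_1^{m+1}=\omega_2^{n+1}=0$ and $\omega_1^m\wedge\omega_2^n$ a volume form on $B$, i.e. $(\omega_1,\omega_2)$ is a symplectic pair (with $\omega_2=0$ when $n=0$). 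The genuinely delicate point is the second step: extracting totally geodesic characteristic foliations from the reflection hypothesis alone, since here we have only the vertical-direction curvature conditions of Chen--Vanhecke rather than the full $R\cdot R=0$ and $\nabla R=0$ exploited in Theorem 3.1.
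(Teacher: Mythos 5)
The decisive step of your argument---that the characteristic foliations $\mathcal{F}_1$ and $\mathcal{F}_2$ are totally geodesic---is only announced, not proved: you describe it as the ``target of this computation'' and you yourself flag it at the end as the genuinely delicate point. That is precisely where the content of the theorem lies, and the route you sketch (feeding the $k=0$ Chen--Vanhecke conditions directly into the structure equations on $M$, mimicking Theorem 3.1) does not obviously close: the bookkeeping in Theorem 3.1 relied on semi-symmetry $R\cdot R=0$ and on $\nabla R=0$ to obtain (3.1)--(3.3), the constant curvature of the leaves, and the vanishing of $\sigma$ on horizontal vectors, none of which are available from the reflection identities alone.

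The paper closes this gap by a different mechanism, which your proposal misses. Regularity of the vertical foliation gives a fibration $\pi\colon M\to M'$; since $Z_1,Z_2$ are Killing, $g$ projects to $g'$, horizontal geodesics project to geodesics, and---the key geometric observation---a reflection in a vertical leaf covers the geodesic symmetry of $(M',g')$ at the corresponding point, so $M'$ is locally symmetric from the outset. One then writes the Riemannian submersion identity (4.1), expressing $g((\nabla_{V^*}R)_{X^*Y^*}U^*,W^*)$ as $g'((\nabla'_VR')_{XY}U,W)$ plus terms built from $\phi_1,\phi_2$ and their covariant derivatives. The left-hand side vanishes by linearizing the Chen--Vanhecke condition $g((\nabla_{X}R)_{XY}X,Y)=0$ with the second Bianchi identity, the $\nabla'R'$ term vanishes because $M'$ is locally symmetric, and the tensorial remainder, specialized to $W=X\in T\mathcal{F}_1$, $Y=\phi_2X$, $U,V\in T\mathcal{F}_2$, gives $g(\nabla_VU,\phi_2X)=0$; together with $\nabla_UZ_i=-\phi_iU$ this kills the second fundamental form of the leaves of $\mathcal{F}_2$, and symmetrically for $\mathcal{F}_1$. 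Local $\phi$-symmetry of the leaves then also drops out of (4.1) once they are known to be totally geodesic, hence Sasakian. Note finally that the clause ``fibers over a locally symmetric space'' refers to this base $M'$, whose local symmetry is established directly at the start, not via Takahashi fibrations of the factors as in your last paragraph; your version of that final step would be acceptable, but only downstream of the totally geodesic step you have not supplied.
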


\begin{proof}
First note that since $Z_1$ and $Z_2$ are Killing vector fields on a normal metric contact pair, the metric is projectable  to a Riemannian metric $g'$ on $M'$.  

Next we  observe that if $\gamma(s)$ is a geodesic on a normal metric contact pair which is initially orthogonal to $\mathcal V$, then it remains orthogonal to $\mathcal V$ for all $s$.  To see this we have only to note that
$$\gamma'g(\gamma',Z_1)=g(\gamma',-\phi_1\gamma')=0$$
and similarly for $Z_2$. Thus horizontal geodesics are projectable to geodesics on $M'$.

 Therefore the geodesic symmetries on $(M',g')$ are isometries and hence $(M',g')$ is a locally symmetric space.
 
 We now compare the covariant derivative of the curvature of $g'$  with that of $g$ acting on horizontal vectors.  For a general vector field $X$ on $M'$ we denote by $X^*$ its horizontal lift to $M$.  Two of the fundamental equations of a Riemannian submersion in the present context are the following (see
 \cite{ON} or \cite{FIP} for a general treatment)
 $$\nabla_{X^*}Y^*=(\nabla'_XY)^*+\a_1(\nabla_{X^*}Y^*)Z_1+\a_2(\nabla_{X^*}Y^*)Z_2,$$
 $$R(X^*,Y^*,U^*,V^*)=R'(X,Y,U,V)$$
$$+2\big(\a_1(\nabla_{X^*}Y^*)\a_1(\nabla_{U^*}V^*)+\a_2(\nabla_{X^*}Y^*)\a_2(\nabla_{U^*}V^*)\big)$$
$$ -\a_1(\nabla_{Y^*}U^*)\a_1(\nabla_{X^*}V^*)-\a_2(\nabla_{Y^*}U^*)\a_2(\nabla_{X^*}V^*)$$
$$+\a_1(\nabla_{X^*}U^*)\a_1(\nabla_{Y^*}V^*)+\a_2(\nabla_{X^*}U^*)\a_2(\nabla_{Y^*}V^*).$$

 Using these by straightforward computation we have the following.
 $$g((\nabla_{V^*}R)_{X^*Y^*}U^*,W^*)=g'((\nabla'_VR')_{XY}U,W)$$
$$+2g((\nabla_{V^*}\phi_1)X^*,Y^*)g(\phi_1U^*,W^*)
+2g(\phi_1X^*,Y^*)g((\nabla_{V^*}\phi_1)U^*,W^*)$$
$$+2g((\nabla_{V^*}\phi_2)X^*,Y^*)g(\phi_2U^*,W^*)
+2g(\phi_2X^*,Y^*)g((\nabla_{V^*}\phi_2)U^*,W^*)$$
$$-g((\nabla_{V^*}\phi_1)Y^*,U^*)g(\phi_1X^*,W^*)
-g(\phi_1Y^*,U^*)g((\nabla_{V^*}\phi_1)X^*,W^*)$$
$$-g((\nabla_{V^*}\phi_2)Y^*,U^*)g(\phi_2X^*,W^*)
-g(\phi_2Y^*,U^*)g((\nabla_{V^*}\phi_2)X^*,W^*)$$
$$+g((\nabla_{V^*}\phi_1)X^*,U^*)g(\phi_1Y^*,W^*)
+g(\phi_1X^*,U^*)g((\nabla_{V^*}\phi_1)Y^*,W^*)$$
$$+g((\nabla_{V^*}\phi_2)X^*,U^*)g(\phi_2Y^*,W^*)
+g(\phi_2X^*,U^*)g((\nabla_{V^*}\phi_2)Y^*,W^*).\eqno(4.1)$$

Linearizing the reflection condition 
$g((\nabla_{X^*}R)_{X^*Y^*}X^*,Y^*)=0$ in the Chen-Vanhecke Theorem
and using the second Bianchi identity 
(cf.\cite{C}, pp. 257-258) 
we see that the left hand side of the above vanishes.  The first term 
on the right vanishes since $M'$ is locally symmetric.  Moreover the
equation is tensorial, so the remainder of the equation holds for all
horizontal $X,Y,U,V,W$, i.e. we have
$$0=2g((\nabla_{V}\phi_1)X,Y)g(\phi_1U,W)
+2g(\phi_1X,Y)g((\nabla_{V}\phi_1)U,W)$$
$$+2g((\nabla_{V}\phi_2)X,Y)g(\phi_2U,W)
+2g(\phi_2X,Y)g((\nabla_{V}\phi_2)U,W)$$
$$-g((\nabla_{V}\phi_1)Y,U)g(\phi_1X,W)
-g(\phi_1Y,U)g((\nabla_{V}\phi_1)X,W)$$
$$-g((\nabla_{V}\phi_2)Y,U)g(\phi_2X,W)
-g(\phi_2Y,U)g((\nabla_{V}\phi_2)X,W)$$
$$+g((\nabla_{V}\phi_1)X,U)g(\phi_1Y,W)
+g(\phi_1X,U)g((\nabla_{V}\phi_1)Y,W)$$
$$+g((\nabla_{V}\phi_2)X,U)g(\phi_2Y,W)
+g(\phi_2X,U)g((\nabla_{V}\phi_2)Y,W).$$

We now make the following choices of unit vector fields: $W=X\in
T{\mathcal F}_1$;
$Y=\phi_2 X$; $U,V\in T{\mathcal F}_2$.
This gives
$$0=3g((\nabla_{V}\phi_2)U,X)=3g(-\phi_2\nabla_VU,X)
=3g(\nabla_VU,\phi_2 X).\eqno(4.2)$$

Next we will show that the integral submanifolds of ${\mathcal F}_2$ are
totally geodesic submanifolds in $M$. Again let $\sigma$ denote the second
fundamental form of an integral submanifold.  Since
$\nabla_{Z_1}Z_1=0$,
$\sigma(Z_1,Z_1)=0$.  For $U$ horizontal in $T{\mathcal F}_2$,
$\nabla_UZ_1=-\phi_1U$ which is again in $T{\mathcal F}_2$ and therefore
$\sigma(U,Z_1)=0$. Since
$\nabla_UZ_2=-\phi_2U=0$, $\sigma$ has no $Z_2$ component.
Finally from (4.2), since $\phi_2 X$ can be regarded
as any horizontal vector in $T{\mathcal F}_1$, $\sigma(U,V)=0$ for any
$U,V\in T{\mathcal F}_2$.  Similarly the integral submanifolds of 
${\mathcal F}_1$ are totally geodesic.

Since the integral submanifolds of ${\mathcal F}_2$ are
totally geodesic submanifolds, the structure tensors
$(\phi_1,Z_1,\alpha_1,g)$ induce a Sasakian structure on each integral
submanifold of ${\mathcal F}_2$ and similarly for the leaves of 
${\mathcal F}_1$.  Moreover for horizontal
$X,Y,U,V,W\in T{\mathcal F}_2$ equation (4.1) and the Sasakian condition readily yields
$$g((\nabla_{V}R)_{XY}U,W)=0$$
  and hence the integral submanifolds are
locally $\phi$-symmetric spaces \cite{Ta}.  Furthermore $M$ being  complete and simply connected, the integral submanifolds are globally $\phi$-symmetric spaces \cite{Ta}. Thus, since the leaves of the
characteristic foliations are totally geodesic  $M$ is the
product of globally $\phi$-symmetric spaces.  In the case of type $(m,0)$, the second factor is one dimensional.

Finally the tensor fields $d\alpha_i$ are also projectable since $\pounds_{Z_i}d\a_j=0$, $i,j=1,2$,
 giving $M'$ a symplectic pair structure.
\end{proof}

\bibliographystyle{amsplain}

\end{document}